\newtheorem{thm}{Theorem}[section]
\newtheorem{cor}[thm]{Corollary}
\newtheorem{lem}[thm]{Lemma}
\newtheorem{pro}[thm]{Proposition}
\theoremstyle{definition}\newtheorem{defn}[thm]{Definition}
\theoremstyle{remark}
\newtheorem{rem}[thm]{Remark}
\numberwithin{equation}{section}
\begin{document}
	
\title
[WEIGHTED CONDITIONAL EXPECTATION]
{The ASCENT AND DESCENT OF WEIGHTED CONDITIONAL EXPECTATION OPERATORS }

\author{\sc\bf  Y. Estaremi }
\address{\sc  Y. Estaremi}

\email{estaremi@gmail.com, Y.Estaremi@qub.ac.uk}
%\email{}
%\email{}

\address{ Department of Mathematics, Payame Noor University (PNU), P. O. Box: 19395-3697, Tehran- Iran}

\subjclass[2010]{46E30, 47A05}

\keywords{weighted conditional expectation, ascent, descent.}

%______________________________________________________________________

\begin{abstract}
In this paper we prove that the ascent and descent of a weighted conditional expectation operator of the form of $M_wEM_u$ on $L^p$-spaces under some weak conditions are finite and less or equal to 2. Moreover, we give some necessary and sufficient conditions for $M_wEM_u$ to be power bounded. In the sequel  we apply some results in operator theory on ascent and descent to $M_wEM_u$. Finally we find that $T=M_wEM_u$ is Cesaro bounded if and only if $\widehat{T}$ is Cesaro bounded.
\end{abstract}

\maketitle

\section{ \sc\bf Preliminaries and Introduction }

Let $X$ be a linear space and $T:X\longrightarrow X$  be a linear operator with domain $\mathcal{D}(T)$ and range $\mathcal{R}(T)$ in $X$.  The null space of the iterates of $T$, $T^n$, is denoted by   $\mathcal{N}(T^n)$, and we know that the null spaces of  $T^n$'s form an increasing chain of subspaces $
(0)=\mathcal{N}(T^0)\subset \mathcal{N}(T)\subset \mathcal{N}(T^2)\subset\ldots$. Also  the ranges  of iterates of $T$ form a nested chain of subspaces $X=\mathcal{R}(T^0)\supset \mathcal{R}(T)\supset \mathcal{R}(T^2)\supset\ldots$.
  Note that if  $\mathcal{N}(T^k)$ coincides with $\mathcal{N}(T^{k+1})$ for some $k$,  it coincides with all $\mathcal{N}(T^n)$ for $n>k$.
The smallest non-negative integer $k$ such that $\mathcal{N}(T^k)=\mathcal{N}(T^{k+1})$ is called the \textit{ascent} of $T$ and denotes by $\alpha(T)$. If there is no such $k$, then  we set $\alpha(T)=\infty$. Also  if $\mathcal{R}(T^k)=\mathcal{R}(T^{k+1})$, for some  non-negative integer $k$, then  $\mathcal{R}(T^n)=\mathcal{R}(T^k)$ for all $n>k$.
 The smallest non-negative integer $k$ such that $\mathcal{R}(T^k)=\mathcal{R}(T^{k+1})$ is called  \textit{descent} of $T$
and denotes by $\delta(T)$. We set $\delta(T)=\infty$ when there is no such $k$. When ascent and descent of an operator are finite, then they are equal and the linear space $X$ can be decomposed into the direct sum of the null and range spaces of a suitable  iterates of $T$. The ascent and descent of an operator can be used to characterized when an operator can be broken into a nilpotent piece and an invertible one; see, for example, \cite{abr, Tay}. For some results on ascent and descent of an operator in general setting see, for example, \cite{tay,yoo}.\\
The operator $T$ is called power bounded if the norms of the powers $T^k$, $k\geq0$, are uniformly bounded $(\sup_{k}\|T^k\|<\infty)$, and Cesaro bounded if the Cesaro means $A_n(T)=n^{-1}\sum^{n-1}_{i=0}T^{i}$ are uniformly bounded.

Let $(X, \mathcal{F}, \mu)$ be a complete $\sigma$-finite measure space. All sets and functions statements  are to be interpreted as holding up to sets of measure zero. We denote the
collection of (equivalence classes modulo sets of zero measure of)
$\mathcal{F}$-measurable complex-valued functions on $X$ by $L^0(\mathcal{F})$.
For a $\sigma$-subalgebra $\mathcal{A}$ of $\mathcal{F}$, the conditional expectation operator associated with $\mathcal{A}$ is the mapping $f\rightarrow E^{\mathcal{A}}f,$ defined for all non-negative function $f$ as well as for all $f\in L^p(\mathcal{F})=L^p(X, \mathcal{F}, \mu)$, $1\leq p\leq\infty$, where $E^{\mathcal{A}}f$ is the unique $\mathcal{A}$-measurable function satisfying

$$\int_{A}(E^{\mathcal{A}}f)d\mu=\int_{A}fd\mu \ \ \ \ \ \ \ \forall A\in \mathcal{A}.$$
We will often write $E$ for $E^{\mathcal{A}}$. This operator will play a major role in our
work and we list here some of its useful properties:

\vspace*{0.2cm} \noindent $\bullet$ \  If $g$ is
$\mathcal{A}$-measurable, then $E(fg)=E(f)g$.

\noindent $\bullet$ \ If $f\geq 0$, then $E(f)\geq 0$; if $E(|f|)=0$,
then $f=0$.

\noindent $\bullet$ \ $|E(fg)|\leq
(E(|f|^p))^{\frac{1}{p}}(E(|g|^{p'}))^{\frac{1}{p'}}$; $p^{-1}+p'^{-1}=1$.

\noindent $\bullet$ \ For each $f\geq 0$, $S(E(f))$ is the smallest $\mathcal{A}$-set containing $S(f)$, where $S(f)=\{x\in X: f(x)\neq 0\}$.

\vspace*{0.2cm}\noindent A detailed discussion and verification of
most of these properties may be found in \cite{rao}. We are concerned here with linear operators acting on $L^p(\mathcal{F})$.
 %We recall that an $\mathcal{A}$-atom of the measure
%$\mu$ is an element $A\in\mathcal{A}$ with $\mu(A)>0$ such that
%for each $F\in\mathcal{A}$, if $F\subseteq A$ then either
%$\mu(F)=0$ or $\mu(F)=\mu(A)$. A measure space $(X,\mathcal{F},\mu)$
%with no atoms is called non-atomic measure space. It is well-known
%fact that every $\sigma$-finite measure space $(X,
%\mathcal{A},\mu_{\mid_{\mathcal{A}}})$ can be partitioned uniquely
%as $X=\left (\bigcup_{n\in\mathbb{N}}A_n\right )\cup B$, where
%$\{A_n\}_{n\in\mathbb{N}}$ is a countable collection of pairwise
%disjoint $\mathcal{A}$-atoms and $B$, being disjoint from each
%$A_n$, is non-atomic (see \cite{z}).\\
We continue our investigations about the class of bounded linear operators on the $L^p$-spaces having the form $M_wEM_u$, where $E$ is the conditional expectation operator, $M_w$ and $M_u$ are (possibly unbounded) multiplication operators and it is called weighted conditional expectation operator (WCE operator). Our interest in WCE operators stems from the fact that such forms tend to appear often in the study of those operators related to conditional expectation. WCE operators appeared in \cite{do}, where it is shown that every contractive projection on certain $L^1$-spaces can be decomposed into an operator of the form $M_wEM_u$ and a nilpotent operator. For more strong results about WCE operators one can see \cite{dhd,gb,lam,mo}, in these papers one can see that a large classes of operators are of the form of WCE operators.\\ In this paper we consider weighted conditional expectation operators of the form of $M_wEM_u$ on $L^p(\mathcal{F})$ and we find some results about the ascent and descent of these operators.

\section{ Main Results}
In this section first we give the definition of weighted conditional expectation operators on $L^p$-spaces.
\begin{defn}
Let $(X,\mathcal{F},\mu)$ be a $\sigma$-finite measure space and let $\mathcal{A}$ be a
$\sigma$-subalgebra of $\mathcal{F}$ such that $(X,\mathcal{A},\mu_{\mathcal{A}})$ is also $\sigma$-finite. Let $E$ be the conditional
expectation operator relative to $\mathcal{A}$. If $1\leq p,q\leq\infty$ and $u,w \in L^0(\mathcal{F})$ such that $uf$ is conditionable and $wE(uf)\in L^{q}(\mathcal{F})$ for all $f\in \mathcal{D}\subseteq L^{p}(\mathcal{F})$, where $\mathcal{D}$ is a linear subspace, then the corresponding weighted conditional expectation (or briefly WCE) operator is the linear transformation $M_wEM_u:\mathcal{D}\rightarrow L^{q}(\mathcal{F})$ defined by $f\rightarrowtail wE(uf)$.
\end{defn}
As was proved in \cite{ej}, the WCE operator $M_wEM_u$ on $L^p(\mathcal{F})$ is bounded if and only if $(E(|u|^{p'}))^{\frac{1}{p'}}(E(|w|^p))^{\frac{1}{p}}\in
L^{\infty}(\mathcal{A})$, where $\frac{1}{p}+\frac{1}{p'}=1$. Also $M_wEM_u$ on $L^1(\mathcal{F})$ is bounded if and only if $uE(|w|)\in L^{\infty}(\mathcal{F})$. Here we give a formula for $n$-powers of the WCE operator $T=M_wEM_u$ in the next lemma. The proof is omitted and it's an easy exercise.
\begin{lem}\label{l1} Let $n\in \mathbb{N}$ and $T=M_wEM_u$ be a bounded operator on $L^p(\mathcal{F})$. Then we have
$$T^n=M_{(E(uw))^{n-1}}M_{w}EM_{u}.$$
\end{lem}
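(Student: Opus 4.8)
The plan is to prove the identity by induction on $n$, the only tool needed being the pull-out property $E(fg)=E(f)g$, valid whenever $g$ is $\mathcal{A}$-measurable. The base case $n=1$ is immediate, since $(E(uw))^{0}=1$ yields $M_{(E(uw))^{0}}M_{w}EM_{u}=M_{w}EM_{u}=T$.

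For the inductive step I would assume $T^{n}=M_{(E(uw))^{n-1}}M_{w}EM_{u}$ and compute $T^{n+1}=T\circ T^{n}$ by evaluating on an arbitrary $f\in\mathcal{D}$. First I would write $T^{n}f=(E(uw))^{n-1}\,w\,E(uf)$, and then apply $T$ to obtain
\[
T^{n+1}f=w\,E\!\left(u\,(E(uw))^{n-1}\,w\,E(uf)\right).
\]
The crucial observation is that both $(E(uw))^{n-1}$ and $E(uf)$ are $\mathcal{A}$-measurable, since the range of $E$ consists of $\mathcal{A}$-measurable functions and products and powers of such functions remain $\mathcal{A}$-measurable. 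Hence I may pull both factors outside the conditional expectation to get $w\,(E(uw))^{n-1}\,E(uf)\,E(uw)=(E(uw))^{n}\,w\,E(uf)$, which is exactly $M_{(E(uw))^{n}}M_{w}EM_{u}f$. This closes the induction.

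The argument is entirely routine; the only point requiring a moment of care is the justification that the scalar factors being extracted are genuinely $\mathcal{A}$-measurable, so that the pull-out property legitimately applies, together with the verification that each intermediate expression remains in the appropriate $L^{q}$-space so that the composition is well defined. Since $T$ is assumed bounded on $L^{p}(\mathcal{F})$, the relevant integrability is guaranteed by the boundedness criterion recalled just before the lemma, and so no genuine obstacle arises.
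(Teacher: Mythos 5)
Your induction argument is correct and is precisely the ``easy exercise'' the paper has in mind (the paper omits the proof): the pull-out property $E(fg)=E(f)g$ for $\mathcal{A}$-measurable $g$ applied to the $\mathcal{A}$-measurable factors $(E(uw))^{n-1}$ and $E(uf)$ gives exactly the stated formula, and boundedness of $T$ keeps every intermediate function in $L^p(\mathcal{F})$ so the composition is legitimate.
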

Hence by using Lemma \ref{l1} we get the Cesaro mean for bounded WCE operator $T=M_wEM_u$ on $L^p(\mathcal{F})$ as follows:
$$A_n(T)=n^{-1}(I+M_{v_{n}}T), \ \ \ \ \ \ \forall n\in \mathbb{N}\setminus\{1\}$$
and $A_{1}(T)=I$, in which $v_n=\sum^{n-2}_{i=0}E(uw)^{i}$.\\
From now on we assume that $T=M_wEM_u$ is a bounded operator on $L^p(\mathcal{F})$.

\begin{thm}\label{t3} For $T=M_wEM_u$  we have
$$\mathcal{N}(T^2)=\mathcal{N}(T^{n+2}),$$
for all $n\in \mathbb{N}$. Moreover, $\alpha(T)\leq2$.
\end{thm}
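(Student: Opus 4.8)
The plan is to reduce the entire statement to the explicit power formula supplied by Lemma \ref{l1} and then to exploit the fact that $\mathbb{C}$ has no zero divisors, so that membership in each null space becomes a purely pointwise condition on the supports of the relevant functions.

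First I would record, straight from Lemma \ref{l1}, that for every integer $k\geq 1$ and every $f$ in the domain one has
$$T^k f = (E(uw))^{k-1}\,w\,E(uf).$$
In particular $T^2 f = E(uw)\,w\,E(uf)$ and $T^{n+2} f = (E(uw))^{n+1}\,w\,E(uf)$. Consequently $f\in\mathcal{N}(T^2)$ precisely when the product $E(uw)\,w\,E(uf)$ vanishes $\mu$-almost everywhere, while $f\in\mathcal{N}(T^{n+2})$ precisely when $(E(uw))^{n+1}\,w\,E(uf)$ vanishes $\mu$-almost everywhere.

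The second step is the heart of the matter. At $\mu$-almost every point $x$, the product $(E(uw))^{n+1}(x)\,w(x)\,E(uf)(x)$ equals zero exactly when one of the three factors $E(uw)(x)$, $w(x)$, $E(uf)(x)$ is zero, and raising the first factor to the power $n+1$ does not enlarge or shrink its zero set. Hence the vanishing locus of $(E(uw))^{n+1}\,w\,E(uf)$ coincides, up to a null set, with that of $E(uw)\,w\,E(uf)$, which yields $\mathcal{N}(T^2)=\mathcal{N}(T^{n+2})$ for every $n\in\mathbb{N}$. The inclusion $\mathcal{N}(T^2)\subseteq\mathcal{N}(T^{n+2})$ is in any event automatic from the increasing chain of null spaces recalled in the Preliminaries; the substance lies in the reverse inclusion, and it holds solely because $\mathbb{C}$ is an integral domain.

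Finally, specializing to $n=1$ gives $\mathcal{N}(T^2)=\mathcal{N}(T^3)$, so by the elementary fact (also recalled in the Preliminaries) that the coincidence of two consecutive null spaces propagates to all higher iterates, we conclude $\alpha(T)\leq 2$. I do not anticipate a genuine obstacle here: the argument hinges on the pointwise zero-divisor observation, and the only care required is the measure-theoretic bookkeeping when passing between ``$\mu$-almost everywhere vanishing'' and ``equality of supports,'' together with the tacit use of the pull-out identity $E(u\,w\,E(uf))=E(uw)\,E(uf)$ underlying Lemma \ref{l1}.
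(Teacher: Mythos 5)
Your proposal is correct and follows essentially the same route as the paper's own proof: both rest on the power formula $T^{k}f=(E(uw))^{k-1}Tf$ from Lemma \ref{l1} together with the pointwise observation that raising $E(uw)$ to a power does not change its zero set, so that $\mathcal{N}(T^2)=\mathcal{N}(T^{n+2})$ and hence $\alpha(T)\leq 2$. The only cosmetic difference is that you expand $Tf$ as $w\,E(uf)$ and track three factors, whereas the paper treats $Tf$ as a single factor and phrases the condition as essential disjointness of supports.
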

\begin{proof}
Let $f\in L^p(\mathcal{F})$. Then we have
$$T^2(f)=E(uw)T(f)=0 \Leftrightarrow S(E(uw))\cap S(Tf)=\emptyset$$
Also,
$$T^{n+2}(f)=E(uw)^{n+1}T(f)=0 \Leftrightarrow S(E(uw)^{n+1})\cap S(Tf)=\emptyset.$$
Since $S(E(uw))=S(E(uw)^{n+1})$, then we get that $\mathcal{N}(T^2)=\mathcal{N}(T^{n+2})$.\\
%If $E(uw)T(f)=T^2(f)=0$ implies that $T(f)=0$, then we get that $S(E(uw))\cap S(Tf)=\emptyset$ and so $\mathcal{R}(T)\subseteq L^P(S(E(uw)),\mathcal{F}|_{S(E(uw))},\mu|_{S(E(uw))})$, that this can't occurs always, specially when $S(E(uw))\neq X$. Hence we get that $asc(T)=2$.
\end{proof}
In the next theorem we decompose $L^p(\mathcal{F})$ as a direct sum of two its closed subspaces under some weak conditions.
\begin{thm}
If there exists an $n>2$ such that $\mathcal{R}(M_{E(uw)^{n-1}}T)$ is closed or $\mathcal{R}(M_{E(uw)^{n-2}}T)+\mathcal{N}(T)$ is closed or $\mathcal{R}(M_{E(uw)^{n-3}}T)+\mathcal{N}(M_{E(uw)}T)$ is closed, then  we have the followings:\\
a) $\mathcal{R}(M_{E(uw)^{n-1}}T$ is closed for all $n\geq2$ and $\mathcal{R}(M_{E(uw)^{j-1}}T)+\mathcal{N}(M_{E(uw)^{k-1}}T)$ is closed for all $j+k\geq2$.\\
b) $L^p(\mathcal{F})=\mathcal{R}(M_{E(uw)}T)\oplus\mathcal{N}(M_{E(uw)}T)$.\\
\end{thm}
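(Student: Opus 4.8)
The plan is to translate everything into statements about the powers of $T$ and then exploit the rigid multiplicative structure that $T=M_wEM_u$ has on its own range. First I would use Lemma \ref{l1} to rewrite the three hypotheses and both conclusions purely in terms of $\mathcal{R}(T^n)$ and $\mathcal{N}(T^k)$: since $M_{E(uw)^{n-1}}T=T^n$, the hypotheses read ``$\mathcal{R}(T^n)$ closed'', ``$\mathcal{R}(T^{n-1})+\mathcal{N}(T)$ closed'', ``$\mathcal{R}(T^{n-2})+\mathcal{N}(T^2)$ closed'' for some $n>2$, conclusion (a) becomes ``$\mathcal{R}(T^n)$ closed for all $n\geq2$ and $\mathcal{R}(T^j)+\mathcal{N}(T^k)$ closed for all $j+k\geq2$'', and (b) becomes $L^p(\mathcal{F})=\mathcal{R}(T^2)\oplus\mathcal{N}(T^2)$. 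Writing $h=E(uw)$ and $\sigma=S(h)$, a direct computation with the averaging identity $E(uwE(uf))=E(uw)E(uf)$ shows that $Tg=hg$ for every $g\in\mathcal{R}(T)$; hence $\mathcal{R}(T^n)=h^{\,n-1}\mathcal{R}(T)$, every element of $\mathcal{R}(T^n)$ with $n\geq2$ is supported in $\sigma$, and $M_h$ is injective on each $\mathcal{R}(T^n)$, $n\geq2$. Together with Theorem \ref{t3} (so that $\mathcal{N}(T^k)=\mathcal{N}(T^2)$ for $k\geq2$) this already yields the purely algebraic fact $\mathcal{R}(T^2)\cap\mathcal{N}(T^2)=\{0\}$.

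The engine of the proof is a dichotomy that I would isolate as a lemma: all of the closedness conditions above are equivalent to the single analytic condition that $h=E(uw)$ be bounded away from zero on its support, i.e. $\operatorname*{ess\,inf}_{\sigma}\abs{h}>0$. The easy direction assumes $\operatorname*{ess\,inf}_{\sigma}\abs{h}=:\eps>0$; then $\chi_{\sigma}/h\in L^{\infty}(\mathcal{A})$, so $Q:=M_{\chi_{\sigma}/h}\,T$ is bounded, and one checks by direct computation (using $Tg=hg$ on $\mathcal{R}(T)$) that $Q^2=Q$, $\mathcal{R}(Q)=\mathcal{R}(T^2)$ and $\mathcal{N}(Q)=\mathcal{N}(T^2)$. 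Thus $Q$ is a bounded idempotent realizing $L^p(\mathcal{F})=\mathcal{R}(T^2)\oplus\mathcal{N}(T^2)$, which is conclusion (b) and simultaneously shows that $\mathcal{R}(T^2)$ is closed. The same boundedness of $\chi_{\sigma}/h$ gives $h\,\mathcal{G}=\chi_{\sigma}\mathcal{G}$ for $\mathcal{G}=E(uL^p(\mathcal{F}))$, whence $\mathcal{R}(T^n)=\mathcal{R}(T^2)$ for all $n\geq2$, so every $\mathcal{R}(T^n)$ is closed; the sums $\mathcal{R}(T^j)+\mathcal{N}(T^k)$ are then handled by decomposing them along the idempotent $Q$ into their $\mathcal{R}(Q)$- and $\mathcal{N}(Q)$-graded pieces, which reduces the closedness of each sum to that of finitely many already-closed pieces. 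This delivers conclusion (a).

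The hard part is the converse direction of the dichotomy: that closedness of any one of the three subspaces forces $\operatorname*{ess\,inf}_{\sigma}\abs{h}>0$. I would argue by contraposition. If $\operatorname*{ess\,inf}_{\sigma}\abs{h}=0$, choose measurable sets $A_k\subseteq\sigma$ of positive finite measure on which $0<\abs{h}<1/k$, and use $\sigma$-finiteness together with the identity $E(u\cdot kf)=kE(uf)$ for $\mathcal{A}$-measurable $k$ to manufacture elements $g_k\in\mathcal{R}(T)$ concentrated near $A_k$ for which the sequence $h^{\,n-1}g_k\in\mathcal{R}(T^n)$ (respectively the corresponding elements of the two sum-spaces) is Cauchy but whose limit cannot lie in $\mathcal{R}(T^n)=h^{\,n-1}\mathcal{R}(T)$, since recovering a preimage would require dividing by $h$ and hence force a preimage outside $L^p(\mathcal{F})$. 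This is the standard ``multiplication by a function not bounded below has non-closed range'' phenomenon, but the genuine work is to carry it out inside the restricted range $\mathcal{R}(T)=wE(uL^p(\mathcal{F}))$ rather than on all of $L^p(\mathcal{F})$, and to run the three variants in parallel; I expect the bookkeeping of supports relative to $S(w)$ and $\sigma$ to be the most delicate point. Once the dichotomy is proved, any one of the three hypotheses for some $n>2$ yields $\operatorname*{ess\,inf}_{\sigma}\abs{h}>0$, and the easy direction then provides (a) and (b) at once.
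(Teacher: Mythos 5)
Your proposal is correct in substance but follows a genuinely different route from the paper. The paper's own proof is essentially a citation: it rewrites the hypotheses in terms of $\mathcal{R}(T^n)$ and $\mathcal{N}(T^k)$ via Lemma \ref{l1}, invokes Theorem \ref{t3} to get $\alpha(T)\leq 2$, and then quotes Theorem 2.1 of \cite{gz} to obtain part (a); part (b) is dispatched in one line by duality ($T^{*}$ is again a WCE operator, so $\alpha(T^{*})\leq 2$, together with an ascent--descent duality relation). You instead prove everything from scratch by establishing a concrete dichotomy: each of the three closedness hypotheses is equivalent to $\abs{E(uw)}$ being essentially bounded away from zero on $\sigma=S(E(uw))$. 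Your computations on the easy side are sound: $Tg=E(uw)g$ on $\mathcal{R}(T)$, the operator $Q=M_{\chi_{\sigma}/E(uw)}T$ is a bounded idempotent with $\mathcal{R}(Q)=\mathcal{R}(T^2)$ and $\mathcal{N}(Q)=\mathcal{N}(T^2)$, and the sums $\mathcal{R}(T^j)+\mathcal{N}(T^k)$ collapse, after splitting off $\chi_{X\setminus\sigma}\mathcal{R}(T)\subseteq\mathcal{N}(T)$, to $\mathcal{R}(Q)+\mathcal{N}(T^k)$, whose closedness follows by applying $Q$ to convergent sequences. This buys several things the paper's argument does not: an explicit projection realizing (b), so that (b) no longer rests on the duality identities $\alpha(T)=\alpha(T^{*})$, $\alpha(T)=\delta(T^{*})$ (which, as stated in the paper, are not valid for general operators without closed-range assumptions); the extra conclusions $\mathcal{R}(T^n)=\mathcal{R}(T^2)$ for $n\geq 2$ and $\delta(T)\leq 2$; and the insight that the hypotheses of this theorem are exactly the hypothesis ``$E(uw)$ bounded away from zero'' of Theorem 2.5.

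The one soft spot is the converse direction, which you only sketch, and where the sketch taken literally would strain for the two sum hypotheses. A Cauchy sequence $h^{n-1}g_k$ with a non-divisible limit does defeat closedness of $\mathcal{R}(T^n)$, but for $\mathcal{R}(T^{j})+\mathcal{N}(T^2)$ (especially $j=1$) the kernel can absorb your test elements: what must be compared is not $\norm{h^{j}g_k}_p$ against $\norm{g_k}_p$, but against the distance of the relevant element to $\mathcal{N}(T^2)$. The clean repair is to pass to quotients: since $\mathcal{N}(T^{j+2})=\mathcal{N}(T^2)$ by Theorem \ref{t3}, closedness of $\mathcal{R}(T^j)+\mathcal{N}(T^2)$ forces the induced injective operator on $L^p(\mathcal{F})/\mathcal{N}(T^2)$ to be bounded below, and for $1<p<\infty$ the quotient norm can be computed exactly from the conditional H\"{o}lder inequality, using $\mathcal{N}(T^2)=\set{g:\chi_{\sigma}E(ug)=0}$, as
\begin{equation*}
\mathrm{dist}(f,\mathcal{N}(T^2))=\norm{\chi_{\sigma}E(uf)\,(E(\abs{u}^{p'}))^{-1/p'}}_p .
\end{equation*}
Feeding in $f_k=\chi_{A_k}w$, where $A_k\subseteq\sigma$ is $\mathcal{A}$-measurable of positive finite measure, $E(\abs{w}^p)$ is bounded on $A_k$, and $\abs{E(uw)}<1/k$ there, makes the ratio of quotient norms at most $1/k$, contradicting bounded-belowness; the same computation with $\mathcal{N}(T)=\set{g:\chi_{S(E(\abs{w}^p))}E(ug)=0}$ handles $\mathcal{R}(T^{j})+\mathcal{N}(T)$, and the case of $\mathcal{R}(T^n)$ itself is easier. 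With this replacement of the Cauchy-sequence bookkeeping, your dichotomy, and hence your whole argument, goes through.
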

\begin{proof} a) Since we have $T^n=M_{E(uw)^{n-1}}T$  and $\mathcal{N}(T^2)=\mathcal{N}(T^{n+2})$ for all $n\in \mathbb{N}$ by the Lemma \ref{l1} and Theorem \ref{t3}, respectively, then by the theorem 2.1 of \cite{gz} we get the proof.\\
b) Since for $T=M_wEM_u$ we have $\alpha(T)=\alpha(T^*)\leq2$ and $\alpha(T)=\delta(T^*)$, then we have the result.
\end{proof}
Here we find some necessary and sufficient conditions for $T=M_wEM_u$ to be power bounded .
\begin{thm}\label{t8} For $T=M_wEM_u\in\mathcal{B}(L^p(\mathcal{F}))$ we have the followings:\\
a) The sequence $\{\|E(uw)^n\|_{\infty}\}_{n\in \mathbb{N}}$ is uniformly bounded if and only if $\|E(uw)\|_{\infty}\leq 1$.\\
b) $T$ is power bounded if and only if $|E(wu)|<1$ on $S((E(|w|^p))^{\frac{1}{p}}(E(|u|^{p'}))^{\frac{1}{p'}})$.
\end{thm}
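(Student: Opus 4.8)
\medskip

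For part (a) the plan is to reduce everything to the elementary identity $\|h^n\|_\infty=\|h\|_\infty^{\,n}$, valid for any $h\in L^\infty(\mathcal{A})$ because the essential supremum of $|h|^n$ is the $n$-th power of the essential supremum of $|h|$. Taking $h=E(uw)$ turns $\{\|E(uw)^n\|_\infty\}_{n}$ into the geometric sequence $\{\|E(uw)\|_\infty^{\,n}\}_{n}$, which is uniformly bounded exactly when its base does not exceed $1$. The only direction needing an argument is that $\|E(uw)\|_\infty>1$ forces unboundedness: by definition of the essential supremum there is then a set of positive measure on which $|E(uw)|\ge 1+\varepsilon$, whence $\|E(uw)^n\|_\infty\ge(1+\varepsilon)^n\to\infty$.

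\medskip

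For part (b) the first step is to compute $\|T^n\|$ explicitly. By Lemma \ref{l1} we have $T^n=M_{E(uw)^{n-1}}T=M_{E(uw)^{n-1}w}EM_u$, so $T^n$ is again a WCE operator, obtained from $T$ by replacing the outer weight $w$ with $E(uw)^{n-1}w$. I would then apply the norm formula behind the boundedness criterion recalled before Lemma \ref{l1}, namely $\|M_wEM_u\|=\|(E(|u|^{p'}))^{1/p'}(E(|w|^p))^{1/p}\|_\infty$, using that the $\mathcal{A}$-measurable factor $E(uw)^{n-1}$ pulls out of the conditional expectation (property $E(fg)=E(f)g$). Writing $\phi=(E(|w|^p))^{1/p}(E(|u|^{p'}))^{1/p'}$ and recalling $E(wu)=E(uw)$, this yields
\[
\|T^n\|=\big\|\,|E(uw)|^{\,n-1}\,\phi\,\big\|_\infty,
\]
so power boundedness of $T$ amounts to uniform boundedness of $\{\,\||E(uw)|^{n-1}\phi\|_\infty\,\}_n$.

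\medskip

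The remaining and decisive step is to show that this uniform bound holds precisely when $|E(uw)|\le 1$ holds a.e.\ on $S(\phi)$. One direction is immediate: if $|E(uw)|\le 1$ on $S(\phi)$ then $|E(uw)|^{n-1}\phi\le\phi$ everywhere (the product vanishes off $S(\phi)$), so $\|T^n\|\le\|\phi\|_\infty=\|T\|$ for all $n$. The converse I would prove by contraposition, and this is the main obstacle: if $|E(uw)|>1$ on a subset of $S(\phi)$ of positive measure, a pointwise bound is not enough, and one must instead slice that subset by the level sets $\{|E(uw)|\ge 1+\tfrac1m\}\cap\{\phi\ge\tfrac1k\}$; since $\phi>0$ throughout $S(\phi)$, for suitable $m,k$ one of these has positive measure, and on it $|E(uw)|^{n-1}\phi\ge(1+\tfrac1m)^{n-1}/k\to\infty$, so $\{\|T^n\|\}_n$ is unbounded. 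This level-set extraction of a region where $|E(uw)|$ is bounded above $1$ while $\phi$ is bounded away from $0$ is the crux; it pins the threshold at $|E(uw)|\le 1$ on $S(\phi)$, which is the assertion of (b) (the strict inequality being understood in this essential-supremum sense).
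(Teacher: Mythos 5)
Your part (a) is the same one-line argument as the paper's, resting on $\|E(uw)^n\|_\infty=\|E(uw)\|_\infty^n$. For part (b) you take a genuinely different route, and it is in fact tighter than the paper's. The paper never computes $\|T^n\|$: for sufficiency it uses only the submultiplicative estimate $\|T^nf\|_p\le\|E(uw)^{n-1}\|_\infty\|T\|\,\|f\|_p$, and for necessity it argues that if $|E(uw)|>1$ on a set of positive measure then ``$M_{(E(uw))^{n-1}}T$ is bounded if and only if $T=0$'' --- a claim that is not justified as stated (each $T^n$ is trivially a bounded operator; what must be shown is that $\sup_n\|T^n\|=\infty$, i.e.\ that the blow-up of $E(uw)^{n-1}$ is actually \emph{seen} by $T$). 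Your argument supplies exactly this missing link: the identity $\|T^n\|=\bigl\||E(uw)|^{n-1}\phi\bigr\|_\infty$ with $\phi=(E(|w|^p))^{1/p}(E(|u|^{p'}))^{1/p'}$, combined with the level-set extraction of a positive-measure set on which simultaneously $|E(uw)|\ge 1+\tfrac1m$ and $\phi\ge\tfrac1k$, converts pointwise blow-up into blow-up of operator norms; the countable-union argument behind that extraction is correct. The price is your reliance on the exact norm equality $\|M_wEM_u\|=\|\phi\|_\infty$: this is true for $1<p<\infty$ and is proved in \cite{ej}, but it is strictly stronger than the boundedness criterion this paper quotes, and its lower-bound half is precisely what your necessity direction uses. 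You should either cite it explicitly or prove that half directly (test functions $f=\chi_B\bar u|u|^{p'-2}$ with $B\in\mathcal{A}$, $B$ a finite-measure subset of a level set of $\phi$, give $\|Tf\|_p\ge(\|\phi\|_\infty-\eps)\|f\|_p$); note also that it excludes $p=1$, where the boundedness criterion itself has a different form.

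One further point, which is a defect of the statement rather than of your proof: what you prove --- and what the paper's own proof actually proves --- is the nonstrict criterion $|E(uw)|\le 1$ a.e.\ on $S(\phi)$, equivalently $\|E(uw)\|_\infty\le 1$. The strict inequality in the statement is not forced by power boundedness: taking $u=w=1$ gives $T=E$, which is power bounded while $E(uw)\equiv 1$ on $S(\phi)$. Your closing remark that the strict inequality must be understood in the essential-supremum sense is the right reading; your version is the correct form of the equivalence, whereas the paper's final sentence (``therefore $|E(wu)|<1$'') does not follow from its own contradiction argument, which only rules out $|E(uw)|>1$ on a set of positive measure.
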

\begin{proof} a) Since $\|E(uw)^n\|_{\infty}=\|E(uw)\|^n_{\infty}$, then clearly we have that the sequence $\{\|E(uw)^n\|_{\infty}\}_{n\in \mathbb{N}}$ is uniformly bounded if and only if $\|E(uw)\|_{\infty}\leq 1$.\\

b) Let $|E(wu)|<1$ on $S((E(|w|^p))^{\frac{1}{p}}(E(|u|^{p'}))^{\frac{1}{p'}})$. Since $S(E(uw))\subseteq S((E(|w|^p))^{\frac{1}{p}}(E(|u|^{p'}))^{\frac{1}{p'}})$, then $|E(wu)|<1$ on $X$. Hence $\|E(w)\|_{\infty}\leq 1$ and so the sequence  $\{\|E(uw)^n\|_{\infty}\}_{n\in \mathbb{N}}$ is uniformly bounded. Therefore there exists $C>0$ such that $\|E(uw)^n\|_{\infty}<C$, for all $n\in \mathbb{N}$. Moreover, by Lemma \ref{l1} we have $T^n=M_{E(uw)^{n-1}}T$. Hence for every $f\in L^p(\mathcal{F})$ and  $n\in \mathbb{N}$ we have
$$\|T^nf\|_p=\|E(uw)^{n-1}T(f)\|_p\leq \|E(uw)^{n-1}\|_{\infty}\|T\|\|f\|_p\leq C\|T\|\|f\|_p.$$
This means that $T$ is power bounded.\\
Conversely, let $T$ be power bounded. Then we can find $C>0$ such that $\|M_{(E(uw))^{n-1}}T\|\leq C$, for all $n\in \mathbb{N}$. We know that $S(Tf)\subseteq S((E(|w|^p))^{\frac{1}{p}}(E(|u|^{p'}))^{\frac{1}{p'}})$ and $S(E(uw))\subseteq S((E(|w|^p))^{\frac{1}{p}}(E(|u|^{p'}))^{\frac{1}{p'}})$. Suppose that there exists $A\in \mathcal{F}$ with $\mu(A)>0$ such that $|E(uw)|>1$ on $A$. Then $\|E(uw)\|_{\infty}>1$ and so $\|(E(uw))^n\|_{\infty}\rightarrow \infty$. In this case $M_{(E(uw))^{n-1}}T$ is bounded if and only if $T=0$. So if $T\neq 0$, then we get a contradiction. Therefore we should have $|E(wu)|<1$ on $S((E(|w|^p))^{\frac{1}{p}}(E(|u|^{p'}))^{\frac{1}{p'}})$.
\end{proof}
In the next theorem we give some weak conditions for $T=M_wEM_u$ under which it has finite descent.
\begin{thm} Consider $T=M_wEM_u$ on $L^p(\mathcal{F})$ and suppose that $E(uw)$ is bounded away from zero i.e, there exists some $C>0$ such that $\mu(\{x\in X:E(uw)(x)\leq C\})=0$. Then we have
$$\mathcal{R}(T^2)=\mathcal{R}(T^{n+2})$$
for all $n\in \mathbb{N}$.
\end{thm}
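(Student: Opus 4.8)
The plan is to prove the two inclusions $\mathcal{R}(T^{n+2})\subseteq\mathcal{R}(T^2)$ and $\mathcal{R}(T^2)\subseteq\mathcal{R}(T^{n+2})$ separately. The first is automatic and uses no hypothesis: the ranges of the iterates form the nested chain recorded in the preliminaries, so $\mathcal{R}(T^{n+2})\subseteq\mathcal{R}(T^2)$ for every $n$ (indeed $T^{n+2}=T^nT^2$). All the work, and the only place where the assumption on $E(uw)$ enters, is therefore the reverse inclusion.

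I would begin by extracting the analytic content of the hypothesis. The assumption that $E(uw)$ is bounded away from zero, read as $|E(uw)|\geq C>0$ almost everywhere, means that for each fixed $n$ the function $(E(uw))^{-n}$ is well defined, is $\mathcal{A}$-measurable (since $E(uw)$ is $\mathcal{A}$-measurable), and satisfies $\|(E(uw))^{-n}\|_{\infty}\leq C^{-n}$, hence lies in $L^{\infty}(\mathcal{A})$. Consequently multiplication by $(E(uw))^{-n}$ carries $L^p(\mathcal{F})$ into itself.

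The key step is an explicit construction of a preimage. Given $g\in\mathcal{R}(T^2)$, I would write $g=T^2f=E(uw)\,w\,E(uf)$ for some $f\in L^p(\mathcal{F})$ via Lemma \ref{l1}, and then set $h=(E(uw))^{-n}f$, which belongs to $L^p(\mathcal{F})$ by the previous paragraph. Using the power formula $T^{n+2}=M_{(E(uw))^{n+1}}T$ of Lemma \ref{l1} together with the pull-out identity $E\big(u(E(uw))^{-n}f\big)=(E(uw))^{-n}E(uf)$, which is legitimate precisely because $(E(uw))^{-n}$ is $\mathcal{A}$-measurable, one computes
$$T^{n+2}h=(E(uw))^{n+1}w\,E\big(u(E(uw))^{-n}f\big)=(E(uw))^{n+1}(E(uw))^{-n}w\,E(uf)=E(uw)\,w\,E(uf)=g.$$
Thus $g\in\mathcal{R}(T^{n+2})$, which yields $\mathcal{R}(T^2)\subseteq\mathcal{R}(T^{n+2})$ and, combined with the automatic inclusion, the desired equality; in particular $\delta(T)\leq2$.

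I expect the main obstacle to be nothing more than verifying that the candidate preimage $h$ genuinely lies in $L^p(\mathcal{F})$, and this is exactly what ``bounded away from zero'' buys us, by placing $(E(uw))^{-n}$ in $L^{\infty}(\mathcal{A})$. Once that boundedness is in hand, the power formula and the pull-out property reduce the verification $T^{n+2}h=g$ to the one-line cancellation displayed above. A small preliminary point worth settling is the reading of the hypothesis when $E(uw)$ is complex-valued; I would interpret it as $|E(uw)|\geq C$ a.e., which is all the argument requires.
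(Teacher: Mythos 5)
Your proposal is correct and is essentially the paper's own argument: the paper likewise takes $g=T^2f$, multiplies $f$ by $E(uw)^{-n}$ (restricted to the support of $E(uw)$, which under the hypothesis is all of $X$ up to a null set), pulls the $\mathcal{A}$-measurable factor through $E$ via Lemma \ref{l1}, and cancels to get $g=T^{n+2}(E(uw)^{-n}f)$, with the hypothesis used exactly where you use it, to place the preimage in $L^p(\mathcal{F})$. The only cosmetic differences are that you make the automatic inclusion $\mathcal{R}(T^{n+2})\subseteq\mathcal{R}(T^2)$ explicit, which the paper leaves implicit.
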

\begin{proof}
Let $g\in \mathcal{R}(T^2)$ i,e $T^2f=g$ for $f\in L^p(\mathcal{F})$ and $G=S(E(uw)$. Since $T$ is bounded, then $(E(|w|^p))^{\frac{1}{p}}(E(|u|^q))^{\frac{1}{q}}\in L^{\infty}(\mathcal{A})$. Hence by conditional type Holder inequality we get that $E(uw)\in L^{\infty}(\mathcal{A})$. Therefore we will have

 \begin{align*}
 g&=E(uw)T(f)\\
 &=E(uw)^{n+1}E(uw)^{-n}\chi_{G}T(f)\\
 &=E(uw)^{n+1}T(E(uw)^{-n}\chi_{G}f)\\
 &=T^{n+2}(E(uw)^{-n}\chi_{G}f).
 \end{align*}
 Since $E(uw)$ is bounded away from zero, then we get that $E(uw)^{-n}\chi_{G}f\in L^p(\mathcal{F})$. And so $g\in \mathcal{R}(T^{n+2})$.
\end{proof}
\begin{cor} For the bounded operator $T=M_wEM_u$ on $L^p(\mathcal{F})$ we have $Ascent(T)\leq2$ and if $E(uw)$ is bounded away from zero, then
$\delta(T)\leq2$.
\end{cor}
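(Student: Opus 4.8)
My plan is to read off both inequalities directly from the two theorems immediately preceding the corollary, since the corollary is essentially a repackaging of those results in the language of the ascent and descent functions $\alpha(T)$ and $\delta(T)$ defined in the Preliminaries.

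For the ascent bound, the plan is to invoke Theorem \ref{t3}, which already establishes $\mathcal{N}(T^2)=\mathcal{N}(T^{n+2})$ for all $n\in\mathbb{N}$. Specializing to $n=1$ gives $\mathcal{N}(T^2)=\mathcal{N}(T^3)$. Recalling that $\alpha(T)$ is by definition the smallest non-negative integer $k$ with $\mathcal{N}(T^k)=\mathcal{N}(T^{k+1})$, the witness $k=2$ immediately yields $\alpha(T)\leq 2$. Note that no boundedness-away-from-zero hypothesis is needed here, matching the fact that the corollary asserts the ascent bound unconditionally.

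For the descent bound, the plan is to add the hypothesis that $E(uw)$ is bounded away from zero and then apply the preceding theorem, which under exactly this hypothesis gives $\mathcal{R}(T^2)=\mathcal{R}(T^{n+2})$ for all $n\in\mathbb{N}$. Taking $n=1$ produces $\mathcal{R}(T^2)=\mathcal{R}(T^3)$, and since $\delta(T)$ is the smallest $k$ with $\mathcal{R}(T^k)=\mathcal{R}(T^{k+1})$, the choice $k=2$ forces $\delta(T)\leq 2$.

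I do not anticipate any genuine obstacle: the corollary is a direct consequence of the two structural theorems, and the only point requiring any care is the bookkeeping of translating the stated stabilization identities (for \emph{all} $n$) into the minimal-$k$ definitions of $\alpha$ and $\delta$, which is handled simply by evaluating at $n=1$. One should only be careful to claim $\leq 2$ rather than equality, since we have not examined whether $\mathcal{N}(T)=\mathcal{N}(T^2)$ or $\mathcal{R}(T)=\mathcal{R}(T^2)$ (which could lower the value); the corollary correctly asserts only the upper bound.
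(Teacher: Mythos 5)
Your proposal is correct and matches the paper's intent exactly: the corollary is stated without proof precisely because it follows immediately from Theorem \ref{t3} (which already asserts $\alpha(T)\leq 2$ via $\mathcal{N}(T^2)=\mathcal{N}(T^{n+2})$) and the preceding descent theorem (specialized at $n=1$ to give $\mathcal{R}(T^2)=\mathcal{R}(T^3)$, hence $\delta(T)\leq 2$). Your care in claiming only the upper bound rather than equality is also the right reading of the statement.
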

\begin{cor} For $T=M_wEM_u$ on $L^p(\mathcal{F})$ the followings hold:
\begin{enumerate}
  \item $\mathcal{R}(T^2)\cap \mathcal{N}(T^{n+2})=\{0\}$, for all $n\in \mathbb{N}$.\\
  \item If $E(uw)$ is bounded away from zero, then $\mathcal{R}(T^{n+2})+ \mathcal{N}(T^{2})=L^p(\mathcal{F})$, for all $n\in \mathbb{N}$.\\
\end{enumerate}
\end{cor}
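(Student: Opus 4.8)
The plan is to handle the two parts separately, reducing each to a short null/range chase once the finite ascent and descent bounds established earlier are in hand; neither part needs any new analytic input beyond Theorem \ref{t3} and the preceding descent theorem.

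For part (1) I would first invoke Theorem \ref{t3}, which gives $\alpha(T)\leq 2$ and, more precisely, $\mathcal{N}(T^{2})=\mathcal{N}(T^{m})$ for every $m\geq 2$; in particular $\mathcal{N}(T^{n+2})=\mathcal{N}(T^{2})=\mathcal{N}(T^{4})$. Hence it suffices to show $\mathcal{R}(T^{2})\cap\mathcal{N}(T^{2})=\{0\}$. Take $g$ in this intersection and write $g=T^{2}f$. Then $T^{2}g=T^{4}f=0$, so $f\in\mathcal{N}(T^{4})=\mathcal{N}(T^{2})$, whence $g=T^{2}f=0$. This closes part (1), and I would emphasize that it holds with no extra hypothesis on $E(uw)$, since the finite ascent bound alone drives the argument.

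For part (2), the hypothesis that $E(uw)$ is bounded away from zero lets me appeal to the preceding descent theorem (equivalently the preceding corollary, $\delta(T)\leq 2$), which yields $\mathcal{R}(T^{n+2})=\mathcal{R}(T^{2})=\mathcal{R}(T^{4})$ for all $n$. Thus it is enough to prove $\mathcal{R}(T^{2})+\mathcal{N}(T^{2})=L^{p}(\mathcal{F})$. Given an arbitrary $f\in L^{p}(\mathcal{F})$, note $T^{2}f\in\mathcal{R}(T^{2})=\mathcal{R}(T^{4})$, so $T^{2}f=T^{4}g$ for some $g$. Then the splitting
\begin{equation*}
f=T^{2}g+(f-T^{2}g)
\end{equation*}
does the job: $T^{2}g\in\mathcal{R}(T^{2})=\mathcal{R}(T^{n+2})$, while $T^{2}(f-T^{2}g)=T^{2}f-T^{4}g=0$ places $f-T^{2}g$ in $\mathcal{N}(T^{2})$. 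The reverse containment is trivial, giving the claimed sum.

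As for the main obstacle, there is no serious analytic difficulty here, because the real work has already been front-loaded into the finite ascent bound (Theorem \ref{t3}) and the descent theorem; the only point that needs care is bookkeeping of which hypotheses each part requires, namely that part (1) is unconditional while part (2) genuinely uses $E(uw)$ bounded away from zero through $\delta(T)\leq 2$. Taken together, parts (1) and (2) reduce to statements about the fixed pair $\mathcal{R}(T^{2})$ and $\mathcal{N}(T^{2})$, and so they recover the direct-sum splitting $L^{p}(\mathcal{F})=\mathcal{R}(T^{2})\oplus\mathcal{N}(T^{2})$ recorded in the earlier decomposition theorem.
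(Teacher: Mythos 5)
Your proof is correct, and it follows exactly the route the paper intends: the paper states this result as a corollary without proof, leaving it to follow from Theorem \ref{t3} (finite ascent, $\mathcal{N}(T^2)=\mathcal{N}(T^{n+2})$) and the preceding descent theorem ($\mathcal{R}(T^2)=\mathcal{R}(T^{n+2})$ when $E(uw)$ is bounded away from zero). You have simply supplied the standard ascent--descent chase that the paper leaves implicit, including the correct bookkeeping that part (1) is unconditional while part (2) requires the bounded-away-from-zero hypothesis.
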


\begin{cor}
 Let $|E(wu)|<1$ on $S((E(|w|^p))^{\frac{1}{p}}(E(|u|^{p'}))^{\frac{1}{p'}})$ and $T=M_wEM_u$ on $L^p(\mathcal{F})$ for $1<p<\infty$. Then $\alpha(I-T)\leq 1$ and  $\alpha(I-T^{\ast})\leq1$.
\end{cor}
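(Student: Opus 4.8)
The plan is to reduce the whole statement to one elementary fact about power bounded operators. First I would invoke Theorem~\ref{t8}(b): the hypothesis that $|E(wu)|<1$ on $S((E(|w|^{p}))^{\frac{1}{p}}(E(|u|^{p'}))^{\frac{1}{p'}})$ is \emph{precisely} the necessary and sufficient condition for $T=M_wEM_u$ to be power bounded. Hence there is a constant $M\geq 1$ with $\|T^{n}\|\leq M$ for every $n\in\mathbb{N}$. This is the only point at which the specific structure of the WCE operator enters the argument; everything else is a general Banach space consideration.

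Next I would establish the general claim that any power bounded operator $T$ satisfies $\alpha(I-T)\leq 1$, that is $\mathcal{N}((I-T)^{2})=\mathcal{N}(I-T)$. The inclusion $\mathcal{N}(I-T)\subseteq\mathcal{N}((I-T)^{2})$ is automatic, so it suffices to take $f\in\mathcal{N}((I-T)^{2})$ and show $(I-T)f=0$. Setting $g=(I-T)f$, the relation $(I-T)^{2}f=0$ gives $(I-T)g=0$, i.e. $Tg=g$. A short induction, using $Tg=g$ at each step, then yields the key identity
$$T^{n}f=f-ng,\qquad n\geq 0.$$
Consequently $n\|g\|=\|f-T^{n}f\|\leq\|f\|+\|T^{n}f\|\leq(1+M)\|f\|$ for all $n$, and letting $n\to\infty$ forces $\|g\|=0$. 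Thus $g=(I-T)f=0$, so $f\in\mathcal{N}(I-T)$, giving $\alpha(I-T)\leq 1$.

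Finally, for the adjoint I would observe that $\|(T^{*})^{n}\|=\|(T^{n})^{*}\|=\|T^{n}\|\leq M$, so $T^{*}$ is itself power bounded on $(L^{p}(\mathcal{F}))^{*}=L^{p'}(\mathcal{F})$; here the restriction $1<p<\infty$ is exactly what guarantees this duality and the reflexivity of the space. Applying the identical argument to $T^{*}$ in place of $T$ then gives $\alpha(I-T^{*})\leq 1$, completing the proof.

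I do not expect a serious obstacle: the entire argument rests on the linear-growth identity $T^{n}f=f-ng$ together with the observation that a quantity bounded uniformly in $n$ cannot grow linearly. The only steps requiring care are confirming that the hypothesis actually delivers power boundedness (which is guaranteed by Theorem~\ref{t8}) and checking that power boundedness transfers verbatim to $T^{*}$ under the pairing $L^{p}\leftrightarrow L^{p'}$.
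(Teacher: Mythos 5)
Your proposal is correct and follows the same route as the paper: both reduce the statement to power boundedness of $T$ via Theorem~\ref{t8}(b) and then invoke the general principle that a power bounded operator $S$ satisfies $\alpha(I-S)\leq 1$, applied to $T$ and $T^{\ast}$. The only difference is that the paper cites Lemma~3.1 and Theorem~3.2 of \cite{gz} for this principle, whereas you prove it inline via the linear-growth identity $T^{n}f=f-ng$ — a nice self-contained replacement for the citation (note only that $\|(T^{\ast})^{n}\|=\|T^{n}\|$ holds on any dual space, so reflexivity is not actually needed for the adjoint half).
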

\begin{proof} By using Theorem \ref{t8} we get that $T$ is power bounded. And so by the Lemma 3.1 and  Theorem 3.2 of \cite{gz} we get the result.
\end{proof}
In the sequel we find that the range and kernel of $T^2$ are quasi complements in $L^p(\mathcal{F})$ under some weak conditions.
\begin{thm} If $T=M_wEM_u$ and $\mathcal{R}(T^2)$  is closed, then $\mathcal{R}(T^2)$ and $\mathcal{N}(T^2)$ are quasi-complements, it means that $\mathcal{R}(T^2)\cap\mathcal{N}(T^2)=0$ and $\mathcal{R}(T^2)+\mathcal{N}(T^2)$ is dense in $L^p(\mathcal{F})$.
\end{thm}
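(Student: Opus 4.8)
The plan is to establish the two defining properties of a quasi-complement separately, dispatching the trivial-intersection claim with the machinery already built and reserving the closed-range hypothesis entirely for the density claim.

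First I would dispose of $\mathcal{R}(T^2)\cap\mathcal{N}(T^2)=\{0\}$, which in fact needs no assumption on the range. This is immediate from item 1 of the Corollary above together with Theorem \ref{t3}: the Corollary gives $\mathcal{R}(T^2)\cap\mathcal{N}(T^{n+2})=\{0\}$, while Theorem \ref{t3} gives $\mathcal{N}(T^{n+2})=\mathcal{N}(T^2)$, so taking $n=1$ yields the claim. Alternatively one argues directly via supports: if $g=T^2f$ lies in $\mathcal{N}(T^2)$, then $g=E(uw)Tf$ gives $S(g)\subseteq S(E(uw))$, while $T^2g=E(uw)^2g=0$ forces $S(g)\cap S(E(uw))=\emptyset$, whence $g=0$.

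The substantive content is the density of $\mathcal{R}(T^2)+\mathcal{N}(T^2)$, and here the plan is to pass to the dual. A subspace of $L^p(\mathcal{F})$ is dense exactly when its annihilator is $\{0\}$, so it suffices to show
\[
\big(\mathcal{R}(T^2)+\mathcal{N}(T^2)\big)^{\perp}=\mathcal{R}(T^2)^{\perp}\cap\mathcal{N}(T^2)^{\perp}=\{0\}.
\]
Working on $L^p(\mathcal{F})$ with $1<p<\infty$, so that the space is reflexive and the adjoint $T^{*}=M_{\overline{u}}EM_{\overline{w}}$ is again a WCE operator on $L^{p'}(\mathcal{F})$, I would invoke the standard annihilator identities $\mathcal{R}(T^2)^{\perp}=\mathcal{N}((T^2)^{*})=\mathcal{N}((T^{*})^2)$, valid unconditionally, and $\mathcal{N}(T^2)^{\perp}=\overline{\mathcal{R}((T^{*})^2)}$.

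The key step, and the only place the hypothesis enters, is the closed-range theorem: since $\mathcal{R}(T^2)$ is closed, $\mathcal{R}((T^2)^{*})=\mathcal{R}((T^{*})^2)$ is closed as well, so the closure in $\mathcal{N}(T^2)^{\perp}=\overline{\mathcal{R}((T^{*})^2)}$ is redundant and the annihilator collapses to $\mathcal{N}((T^{*})^2)\cap\mathcal{R}((T^{*})^2)$. Because $T^{*}$ is itself a WCE operator, the trivial-intersection result proved in the first half of this theorem (equivalently, item 1 of the Corollary applied to $T^{*}$ together with Theorem \ref{t3} for $T^{*}$) gives $\mathcal{N}((T^{*})^2)\cap\mathcal{R}((T^{*})^2)=\{0\}$, and density follows. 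I expect the main obstacle to be bookkeeping rather than conceptual: one must verify that $(T^{*})^2=(T^2)^{*}$ and that $T^{*}$ genuinely carries the form $M_{\overline{u}}EM_{\overline{w}}$, so that Theorem \ref{t3} and the Corollary are legitimately available for it, and one must keep in mind that the reflexive range $1<p<\infty$ is precisely what secures the clean duality and the identity $\mathcal{N}(T^2)^{\perp}=\overline{\mathcal{R}((T^{*})^2)}$; for $p=1$ the dual $L^{\infty}$ and the attendant weak-$*$ closures would demand separate treatment.
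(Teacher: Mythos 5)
Your proposal is correct, and it proves the statement rather than citing it: the paper's own proof is two lines, observing that Theorem \ref{t3} gives $\alpha(T)=\alpha(T^{*})\leq 2$ and then invoking Theorem 4.1 of Grabiner--Zem\'anek \cite{gz} as a black box, whereas you essentially reprove the relevant special case of that theorem. The ingredients are the same --- ascent bounds for both $T$ and $T^{*}$ (the latter available because $T^{*}$ is again a WCE operator, which is also what the paper tacitly uses) plus the closed-range hypothesis --- but you make explicit exactly where each enters: the trivial intersection needs no hypothesis at all (your support argument is valid, since $g=E(uw)Tf$ forces $S(g)\subseteq S(E(uw))$ while $T^{2}g=E(uw)^{2}g=0$ forces the opposite), and the closedness of $\mathcal{R}(T^{2})$ is consumed entirely by the closed range theorem to collapse $\mathcal{N}(T^{2})^{\perp}=\overline{\mathcal{R}((T^{*})^{2})}$ to $\mathcal{R}((T^{*})^{2})$, after which the intersection result applied to the WCE operator $T^{*}$ kills the annihilator. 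What your route buys is a self-contained argument that exposes the mechanism hidden in the citation; what the paper's route buys is brevity and generality of reference. One refinement you could make: your restriction to $1<p<\infty$ is unnecessary, because the full closed range theorem already yields $\mathcal{N}(S)^{\perp}=\mathcal{R}(S^{*})$ (not merely the weak-$*$ closure) whenever $\mathcal{R}(S)$ is closed, in an arbitrary Banach space; so the duality step survives even at $p=1$, where reflexivity fails, provided one checks that the support argument of Theorem \ref{t3} still applies to the adjoint WCE operator acting on $L^{\infty}(\mathcal{F})$ --- which it does, since that argument is pointwise and never uses $p<\infty$.
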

\begin{proof} By Theorem \ref{t3} we have that $\alpha(T)=\alpha(T^*)\leq2$. Therefore by Theorem 4.1 of \cite{gz} we get that $\mathcal{R}(T^2)$ and $\mathcal{N}(T^2)$ are quasi-complements.
\end{proof}
By using some results of \cite{gz} for the power bounded operator $T=M_wEM_u$ we can write $L^p(\mathcal{F})$ as the direct some of two closed subspaces.
\begin{pro} If $|E(wu)|<1$ on $S((E(|w|^p))^{\frac{1}{p}}(E(|u|^{p'}))^{\frac{1}{p'}})$ and $\mathcal{R}(I-T)$ is closed, then $L^p(\mathcal{F})=\mathcal{R}(I-T)\oplus \mathcal{N}(I-T)$.
\end{pro}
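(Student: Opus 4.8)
The plan is to recognize $L^p(\mathcal{F})=\mathcal{R}(I-T)\oplus\mathcal{N}(I-T)$ as the topological direct-sum decomposition that accompanies the equality of a finite ascent and a finite descent, applied to the operator $S=I-T$, and to supply the two required inputs — finiteness on both sides and the topological (closed-range) refinement — from the results already in place.

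First I would invoke Theorem \ref{t8}(b): the hypothesis $|E(wu)|<1$ on $S((E(|w|^p))^{1/p}(E(|u|^{p'}))^{1/p'})$ is exactly the criterion making $T=M_wEM_u$ power bounded, so $T$ is power bounded. This is the same hypothesis under which the Corollary preceding this proposition holds, and that corollary supplies $\alpha(I-T)\le 1$ together with $\alpha(I-T^{*})\le 1$. These are the two finiteness facts I will exploit.

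Second, since $1<p<\infty$ the space $L^p(\mathcal{F})$ is reflexive and $(I-T)^{*}=I-T^{*}$, so the ascent of $I-T^{*}$ controls the descent of $I-T$ through the annihilator identity $\mathcal{N}((I-T^{*})^{n})=\mathcal{R}((I-T)^{n})^{\perp}$. Reading $\alpha(I-T^{*})\le 1$ through this identity says that the closures of the ranges $\mathcal{R}((I-T)^{n})$ stabilize at $n=1$; the hypothesis that $\mathcal{R}(I-T)$ is closed is what promotes this stabilization of closures to a genuine stabilization of ranges, i.e. $\delta(I-T)\le 1$. Consequently $\alpha(I-T)=\delta(I-T)\le 1$, finite and equal, and the standard decomposition attached to a common finite ascent/descent value $k\le 1$ yields $L^p(\mathcal{F})=\mathcal{N}((I-T)^{k})\oplus\mathcal{R}((I-T)^{k})=\mathcal{N}(I-T)\oplus\mathcal{R}(I-T)$. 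In practice I would simply quote the corresponding decomposition theorem of \cite{gz} with $S=I-T$, whose hypotheses — power boundedness and closedness of $\mathcal{R}(I-T)$ — are now met.

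The delicate point, and the step I expect to be the main obstacle, is precisely the duality passage from $\alpha(I-T^{*})\le 1$ to $\delta(I-T)\le 1$: the annihilator relation only delivers $\overline{\mathcal{R}((I-T)^{2})}=\overline{\mathcal{R}(I-T)}$, so that $\mathcal{R}((I-T)^{2})$ is a priori only \emph{dense} in the closed subspace $\mathcal{R}(I-T)$. Converting this density into the equality $\mathcal{R}((I-T)^{2})=\mathcal{R}(I-T)$ is exactly where the closed-range assumption is indispensable; dropping it, one would be left only with the weaker ergodic-type splitting $L^p(\mathcal{F})=\mathcal{N}(I-T)\oplus\overline{\mathcal{R}(I-T)}$. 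I would therefore make sure the cited result from \cite{gz} packages this closed-range upgrade, or else argue separately that finite ascent of $I-T$ together with closedness of $\mathcal{R}(I-T)$ forces $\mathcal{R}((I-T)^{2})$ to be closed.
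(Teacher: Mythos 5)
Your ``in practice'' version of the argument --- quote a decomposition theorem from \cite{gz} whose hypotheses are power boundedness and closedness of $\mathcal{R}(I-T)$ --- is essentially what the paper does: its proof notes that the hypothesis makes $T$ power bounded, hence $n^{-1}T^nf=n^{-1}M_{E(uw)^{n-1}}Tf\rightarrow 0$ for every $f$, and then invokes Theorem 4.4 of \cite{gz}. To that extent you and the paper coincide. The trouble is with the self-contained route you build around it, and specifically with the repair you propose for the gap you yourself flag. You suggest one could ``argue separately that finite ascent of $I-T$ together with closedness of $\mathcal{R}(I-T)$ forces $\mathcal{R}((I-T)^{2})$ to be closed.'' That claim is false, even on a Hilbert space and even if one adds finite ascent of the adjoint. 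Take $H=\ell^2\oplus\ell^2$, let $A$ be the diagonal operator $Ae_n=e_n/n$ (injective, self-adjoint, with dense non-closed range), and define $V(x_1,x_2)=(x_2,Ax_2)$. Then $\mathcal{N}(V)=\ell^2\oplus 0$ and $\mathcal{R}(V)=\{(x,Ax):x\in\ell^2\}$ is closed (a graph); one checks $\mathcal{N}(V)\cap\mathcal{R}(V)=0$, so $\alpha(V)\leq 1$, and $\mathcal{N}(V^{*})\cap\mathcal{R}(V^{*})=(\mathcal{N}(V)+\mathcal{R}(V))^{\perp}=0$, so $\alpha(V^{*})\leq 1$ as well. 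Yet $\mathcal{R}(V^2)=\{(Ax,A^2x):x\in\ell^2\}$ is dense in, but not equal to, $\mathcal{R}(V)$, and $\mathcal{N}(V)+\mathcal{R}(V)=\ell^2\oplus\mathcal{R}(A)$ is dense but proper. So the three ``soft'' facts you extract from power boundedness --- $\alpha(I-T)\leq 1$, $\alpha(I-T^{*})\leq 1$, and $\mathcal{R}(I-T)$ closed --- are strictly weaker than the conclusion: they are consistent with its failure, and no duality argument can bridge that.

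The missing ingredient is that power boundedness must be re-used in the final step, not merely through its ascent corollaries. Concretely, for $1<p<\infty$ one runs the mean ergodic argument: the Ces\`{a}ro means $A_n(T)f$ are bounded, so by reflexivity a subsequence converges weakly to some $z$; since $(I-T)A_n(T)f=n^{-1}(f-T^nf)\rightarrow 0$ one gets $z\in\mathcal{N}(I-T)$, and since $f-A_n(T)f\in\mathcal{R}(I-T)$, which is closed and convex hence weakly closed, one gets $f-z\in\mathcal{R}(I-T)$; together with $\mathcal{N}(I-T)\cap\mathcal{R}(I-T)=0$ (ascent $\leq 1$) this yields the direct sum. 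Alternatively, since here $\|T^n\|\leq\|E(uw)^{n-1}\|_{\infty}\|T\|\leq\|T\|$, one has $\|T^n\|/n\rightarrow 0$ in norm, so Lin's uniform ergodic theorem applies on any $L^p$. Either of these is the actual content of the result cited by the paper; your proof becomes correct only once you replace your proposed ``separate argument'' by one of them (or by the citation itself).
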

\begin{proof} If the sequence $\{\|E(uw)^n\|_{|\infty}\}_{n\in \mathbb{N}}$ is uniformly bounded, then $n^{-1}M_{E(uw)^{n-1}}Tf\rightarrow 0$ for every $f\in L^p(\mathcal{F})$. Hence by Theorem 4.4 of \cite{gz}  we get the proof.
\end{proof}
We know that the spectral radius of $T$ is equal to $\|E(uw)\|_{\infty}$ \cite{y1}. So if we assume $\|E(uw)\|_{\infty}\leq 1$, then we have $(I-T)^{-1}=\lim_{n \rightarrow \infty} (I+M_{v_n}T)$. Moreover, we have computed the inverse $(I-T)^{-1}$ for $T+M_wEM_u$ in \cite{ye}. Here we have a formula for $(I-T)^{-1}$ under some weaker conditions.
\begin{rem}
Let $|E(wu)|<1$ on $S((E(|w|^p))^{\frac{1}{p}}(E(|u|^{p'}))^{\frac{1}{p'}})$ and $T=M_wEM_u$ . Then $\overline{\mathcal{R}(I-T)}=L^p(\mathcal{F})$ and equivalently we have the followings:\\
(i) $\mathcal{R}(I-T)=L^p(\mathcal{F})$;\\
(ii) $I-T$ is invertible;\\
(iii) $\{\|(I-T)^{-1}(\frac{(n-1)I-M_{v_n}T}{n})\|\}_{n\in \mathbb{N}}=\{\|n^{-1}(M_{w_n}T+(n-1)I)\|\}_{n\in \mathbb{N}}$ is bounded;\\
(iv) $\{(I-T)^{-1}(\frac{(n-1)I-M_{v_n}T}{n})(f)\}_{n\in \mathbb{N}}=\{n^{-1}(M_{w_n}Tf+(n-1)f)\}_{n\in \mathbb{N}}$  converges for all $f\in L^p(\mathcal{F})$. In which $w_n=\sum^{n-2}_{i=1}(n-i-1)E(uw)^{i-1}$.\\
In this case, $n^{-1}(M_{w_n}Tf+(n-1)f)\rightarrow (I-T)^{-1}(f)$ for all $f\in L^p(\mathcal{F})$.\\
(v) $A_n(T)(f)$ converges to a $T$-invariant limit for all $f\in L^p(\mathcal{F})$.
\end{rem}
\begin{proof} By our assumption we get that $T$ is power bounded and so for every $f\in L^p(\mathcal{F})$ we have $A_n(T)(f)\rightarrow 0$. As is defined in \cite{gz} $B_n(T)=n^{-1}(T^{n-2}+2T^{n-3}+....+(n-2)T+(n-1)I)$. we have it for $T=M_wEM_u$ as follows:
$$B_n(T)=n^{-1}(M_{w_n}T+(n-1)I),$$
 in which $w_n=\sum^{n-2}_{i=1}(n-i-1)E(uw)^{i-1}$.
 Therefore by Proposition 4.5 of \cite{gz} we have the proof of (i)-(iv). Since $\{\|E(uw)^n\|_{\infty}\}_{n\in \mathbb{N}}$ is uniformly bounded, then $T=M_wEM_u$ is power bounded. Also the closure of a norm bounded subset of $L^p(\mathcal{F})$ is weakly compact. Hence it's weakly sequentially compact. Then we have (v).
\end{proof}
Here we recall a fundamental lemma in operator theory.
\begin{lem}\label{l2} Let $T$ be a bounded operator on the Hilbert space $\mathcal{H}$ and $\lambda\geq0$. Then we have
$$\|\lambda I+T^{\ast}T\|=\lambda+\|T^{\ast}T\|=\lambda+\|T\|^2.$$
Specially, if $T$ is a positive operator, then
$\|\lambda I+T\|=\lambda+\|T\|$.
\end{lem}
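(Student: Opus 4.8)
The plan is to reduce everything to the elementary fact that for a positive operator the operator norm is realized as the supremum of its quadratic form, and then invoke the C*-identity $\|T^{\ast}T\|=\|T\|^2$. First I would observe that $T^{\ast}T$ is a positive (in particular self-adjoint) operator, and that $\lambda I+T^{\ast}T$ is again positive precisely because $\lambda\geq 0$. This positivity is what lets me compute the norm through the quadratic form without absolute values, and it is the only place where the hypothesis $\lambda\geq 0$ is genuinely used.

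Next I would recall that for any positive operator $A$ on $\mathcal{H}$ one has $\|A\|=\sup_{\|x\|=1}\langle Ax,x\rangle$, the supremum being taken over unit vectors $x$. Applying this to $A=\lambda I+T^{\ast}T$ and expanding the inner product gives
$$\|\lambda I+T^{\ast}T\|=\sup_{\|x\|=1}\langle(\lambda I+T^{\ast}T)x,x\rangle=\sup_{\|x\|=1}\bigl(\lambda+\langle T^{\ast}Tx,x\rangle\bigr)=\lambda+\sup_{\|x\|=1}\langle T^{\ast}Tx,x\rangle=\lambda+\|T^{\ast}T\|,$$
where the additive constant $\lambda$ pulls out of the supremum since $\langle x,x\rangle=1$ on unit vectors. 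The C*-identity $\|T^{\ast}T\|=\|T\|^2$ then yields the second equality $\lambda+\|T^{\ast}T\|=\lambda+\|T\|^2$, closing the chain. As an alternative one could instead note that $\sigma(\lambda I+T^{\ast}T)=\lambda+\sigma(T^{\ast}T)$ and use that the norm of a self-adjoint operator equals its spectral radius, but the quadratic-form argument is the most direct.

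Finally, the \emph{specially} clause is the same computation with $T^{\ast}T$ replaced by an arbitrary positive operator $T$: positivity of $T$ and of $\lambda I+T$ gives $\|\lambda I+T\|=\sup_{\|x\|=1}(\lambda+\langle Tx,x\rangle)=\lambda+\|T\|$ at once. There is no serious obstacle here; the one point worth flagging is that the quadratic-form formula for the norm requires self-adjointness, so the argument would fail for a general bounded operator, and it is the positivity of both $T^{\ast}T$ and $\lambda I+T^{\ast}T$—not merely self-adjointness—that makes the supremum additive in $\lambda$.
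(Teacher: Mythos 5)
Your proof is correct. The paper offers no argument for this lemma at all (its ``proof'' is the single line ``It is an easy exercise''), and your quadratic-form argument --- positivity of $\lambda I + T^{\ast}T$ from $\lambda\geq 0$, the identity $\|A\|=\sup_{\|x\|=1}\langle Ax,x\rangle$ for positive $A$, pulling the constant $\lambda$ out of the supremum, and finishing with the C*-identity $\|T^{\ast}T\|=\|T\|^2$ --- is exactly the standard argument such a remark points to, with the roles of positivity and of the hypothesis $\lambda\geq0$ correctly identified.
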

\begin{proof} It is an easy exercise.
\end{proof}
If we set $u=\bar{w}$, then $T=M_wEM_u$ is positive on $L^2(\mathcal{F})$ and so $A_n(T)=n^{-1}(I+M_{v_n}M_{w}EM_{\bar{w}})$ is positive too, in which $v_n=\sum^{n-2}_{i=0}E(|w|^2)^i$. By using the Lemma \ref{l2} we get that $\|A_n(T)\|=n^{-1}(1+\|v_nE(|w|^2)\|_{\infty})$. So $T$ is Cesaro bounded if and only if the sequence $\{n^{-1}\|v_nE(|w|^2)\|_{\infty}\}_{n\in \mathbb{N}}$ is uniformly bounded.
One can see the paper \cite{ej} for more information about $T=M_wEM_u$. For instance we have the Aluthge transformation of
$T=M_wEM_u$ on $L^2(\mathcal{F})$ as follow:
$$\widehat{T}(f)=\frac{\chi_{S}E(uw)}{E(|u|^{2})}\bar{u}E(uf), \ \ \ \ \ \  \ \ \ \  \ \  \ f\in L^{2}(\Sigma),$$
in which $S$ is the support of $E(|u|^{2})$. Therefore the Aluthge transformation of $T$ is a weighted conditional expectation operator $\widehat{T}=M_vEM_u$, in which $v=\frac{\chi_{S}E(uw)}{E(|u|^{2})}\bar{u}$. Hence we have $\|\widehat{T}\|=\|E(uw)\|_{\infty}$, $E(uv)=E(uw)$ and $\widehat{T}^n=M_{\frac{\chi_{S}E(uw)^n}{E(|u|^{2})}}M_{\bar{u}}EM_u$. In addition we have $\|\widehat{T}^n\|=\|E(uw)^n\|_{\infty}$ and $v_n=\sum^{n-2}_{i=0}E(\frac{\chi_{S}E(uw)^n}{E(|u|^{2})}\bar{u}u)^i=\sum^{n-2}_{i=0}E(uw)^i$. By these observations we have the next theorem.
\begin{thm} For the bounded operator $T=M_wEM_u$ with $w,u\geq 0$, the followings are equivalent:\\
a) $T$ is Cesaro bounded.\\
b) $\widehat{T}$ is Cesaro bounded.\\
c) The sequence $\{n^{-1}\|v_n\|_{\infty}\}_{n\in \mathbb{N}}$ is uniformly bounded.\\
d) The sequence $\{n^{-1}\|v_n(E(|w|^2))^{\frac{1}{2}}(E(|w|^2))^{\frac{1}{2}}\|_{\infty}\}_{n\in \mathbb{N}}$ is uniformly bounded.
\end{thm}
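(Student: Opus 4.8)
The plan is to compute, or to two-sidedly estimate, the Cesaro norms $\|A_n(T)\|$ and $\|A_n(\widehat T)\|$ and then to match each of (a) and (b) with one of the sequence conditions (c), (d). The structural observation I would establish first is that, because $u,w\geq 0$, the Aluthge transform $\widehat T$ is a \emph{positive} operator. Indeed, writing $P=E(|u|^2)^{-1}M_uEM_u$ for the orthogonal projection attached to $\mathcal A$ and $u$ (one checks directly that $Pf=E(|u|^2)^{-1}uE(uf)$ satisfies $P^2=P=P^\ast$), the formula recalled just before the theorem reads $\widehat T=M_{E(uw)}P$; since $E(uw)\geq 0$ is $\mathcal A$-measurable it commutes with $P$, so $\widehat T$ is a product of two commuting positive operators and is therefore positive. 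This is the key point, since $T$ itself need not be positive.

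For (b)$\Leftrightarrow$(c) I would use that $A_n(\widehat T)=n^{-1}(I+M_{v_n}\widehat T)$ with $M_{v_n}\widehat T=M_{v_nE(uw)}P$ again positive, so Lemma \ref{l2} with $\lambda=1$ gives the \emph{exact} value $\|A_n(\widehat T)\|=n^{-1}(1+\|M_{v_n}\widehat T\|)=n^{-1}(1+\|v_nE(uw)\|_\infty)$, the last equality following from the WCE norm formula of \cite{ej}. Now the defining relation $v_{n+1}=\sum_{i=0}^{n-1}E(uw)^i=1+v_nE(uw)$ together with $v_{n+1}\geq 1$ yields $\|v_nE(uw)\|_\infty=\|v_{n+1}\|_\infty-1$, and since $n^{-1}(n+1)$ is bounded a routine index shift shows that $\{n^{-1}\|v_nE(uw)\|_\infty\}$ is bounded exactly when $\{n^{-1}\|v_n\|_\infty\}$ is; this is (b)$\Leftrightarrow$(c).

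For (a)$\Leftrightarrow$(d) no exact formula is available because $T$ is not positive; instead I would write $A_n(T)=n^{-1}(I+R_n)$ with $R_n=M_{v_n}T=M_{v_nw}EM_u$, evaluate $\|R_n\|=\|v_n(E(|w|^2))^{1/2}(E(|u|^2))^{1/2}\|_\infty$ by the WCE norm formula (this is precisely the quantity in (d)), and then apply the two triangle inequalities $\|R_n\|-1\leq\|I+R_n\|\leq\|R_n\|+1$. Dividing by $n$ gives $n^{-1}\|R_n\|-n^{-1}\leq\|A_n(T)\|\leq n^{-1}\|R_n\|+n^{-1}$, so $\{\|A_n(T)\|\}$ is bounded iff $\{n^{-1}\|R_n\|\}$ is, which is exactly (a)$\Leftrightarrow$(d).

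Finally I would close the loop with (c)$\Leftrightarrow$(d). The conditional Cauchy--Schwarz inequality together with the boundedness of $T$ gives the pointwise estimate $E(uw)\leq (E(|w|^2))^{1/2}(E(|u|^2))^{1/2}\leq\|T\|$; multiplying by $v_n\geq 0$ and taking essential suprema yields $\|v_nE(uw)\|_\infty\leq\|v_n(E(|w|^2))^{1/2}(E(|u|^2))^{1/2}\|_\infty\leq\|T\|\,\|v_n\|_\infty$. Hence (c) forces (d), while (d) forces boundedness of $\{n^{-1}\|v_nE(uw)\|_\infty\}$ and therefore, by the computation in the (b)$\Leftrightarrow$(c) step, of $\{n^{-1}\|v_n\|_\infty\}$, i.e. (c). Combining the three blocks gives (a)$\Leftrightarrow$(d)$\Leftrightarrow$(c)$\Leftrightarrow$(b). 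The only genuine obstacle is the non-positivity of $T$: once it is isolated to the (a)$\Leftrightarrow$(d) step and absorbed by the crude two-sided triangle bound (which is harmless for a boundedness assertion), and once the positivity of $\widehat T$ is recognized and exploited through Lemma \ref{l2}, everything else reduces to bookkeeping with the $\mathcal A$-measurable function $v_n$.
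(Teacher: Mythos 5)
Your proposal is correct, and in outline it is the proof the paper intends: the paper offers no written argument beyond the sentence ``By these observations we have the next theorem,'' and your proof assembles exactly those observations (the Aluthge transform $\widehat T=M_vEM_u$ has $E(uv)=E(uw)$ and hence the same $v_n$ as $T$, Lemma \ref{l2}, the WCE norm formula of \cite{ej}, and the recursion $v_{n+1}=1+v_nE(uw)$). Two of your choices, however, go beyond what the paper records, and both are genuine improvements. First, the paper's exact Cesaro-norm formula $\|A_n(T)\|=n^{-1}(1+\|v_nE(|w|^2)\|_\infty)$ is derived only for the self-adjoint case $u=\bar w$; under the theorem's actual hypothesis $u,w\geq 0$ the operator $T$ is positivity-preserving but in general \emph{not} self-adjoint, so Lemma \ref{l2} cannot be applied to $M_{v_n}T$ and no exact formula for $\|A_n(T)\|$ is available. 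Your two-sided triangle bound $\|M_{v_n}T\|-1\leq n\|A_n(T)\|\leq \|M_{v_n}T\|+1$ is precisely the right substitute, since only boundedness is at stake; this makes the step (a)$\Leftrightarrow$(d) rigorous where the paper is silent. Second, your factorization $\widehat T=M_{E(uw)}P$, with $P\colon f\mapsto \chi_{S}E(|u|^2)^{-1}uE(uf)$ an orthogonal projection commuting with $M_{E(uw)}$, supplies the justification (nowhere written in the paper) that $\widehat T$ and $M_{v_n}\widehat T$ are positive operators; this is exactly where the hypothesis $u,w\geq 0$ enters and what licenses Lemma \ref{l2} in the step (b)$\Leftrightarrow$(c). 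Finally, note that you have tacitly, and correctly, read statement (d) as involving $\|v_n(E(|w|^2))^{\frac12}(E(|u|^2))^{\frac12}\|_\infty$; the repeated factor $(E(|w|^2))^{\frac12}$ in the paper must be a typo, since under the literal reading the equivalence fails (take $u=0$ and $E(|w|^2)$ unbounded: then $T=0$ satisfies (a)--(c) while (d) fails).
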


%If $T$ is power bounded on a reflexive Banach spaces, then averages $A_n(T)(x)$ converges to a $T$-invariant limit for all x.
%Page 71 of the book entitled "Ergodic Theorem".\\
%

\end{document}